\documentclass[11pt,fleqn]{amsart}
\usepackage[paper=a4paper]
 {geometry}

\pagestyle{plain}
\pagenumbering{arabic}
\usepackage[small]{titlesec}
\usepackage{hyperref}

\usepackage{amsthm,thmtools}
\linespread{1.05}
\setlength{\parskip}{1.2ex}

\usepackage[utf8]{inputenc}
\usepackage[english]{babel}
\usepackage{enumerate}
\usepackage[osf,noBBpl]{mathpazo}
\usepackage[alphabetic,initials]{amsrefs}
\usepackage{amsfonts,amssymb,amsmath}
\usepackage{mathtools}
\usepackage{graphicx}
\usepackage[poly,arrow,curve,matrix]{xy}
\usepackage{wrapfig}
\usepackage{xcolor}
\usepackage{helvet}
\usepackage{stmaryrd}
\usepackage{tikz}
\usepackage{mathdots}
\usepackage{arydshln}

\declaretheoremstyle[
bodyfont=\itshape,]{mystyle}
\declaretheorem[name=Lemma, style=mystyle, numberwithin=section]{Lemma}
\declaretheorem[name=Proposition, style=mystyle, sibling=Lemma]{Proposition}
\declaretheorem[name=Theorem, style=mystyle, sibling=Lemma]{Theorem}
\declaretheorem[name=Corollary, style=mystyle, sibling=Lemma]{Corollary}
\declaretheorem[name=Definition, style=mystyle, sibling=Lemma]{Definition}

\declaretheoremstyle[numbered=no, 
bodyfont=\itshape]{mystyle-empty}
\declaretheorem[name=Lemma, style=mystyle-empty]{Lemma*}
\declaretheorem[name=Proposition, style=mystyle-empty]{Proposition*}
\declaretheorem[name=Theorem, style=mystyle-empty]{Theorem*}
\declaretheorem[name=Corollary, style=mystyle-empty]{Corollary*}
\declaretheorem[name=Definition, style=mystyle-empty]{Definition*}
\declaretheorem[name=Example(s), style=mystyle-empty]{Example*}
\declaretheorem[name=Remark, style=mystyle-empty]{Remark*}

\renewcommand\thesection{\arabic{section}}
\titleformat{\section}
 {\normalfont\bfseries\large}
 {\thesection. \space}{0em}{}

\renewcommand\thesubsection{\arabic{subsection}}
\titleformat{\subsection}
 {\normalfont\bfseries}
 {\thesection.\thesubsection \space}{0em}{}

\renewcommand\proofname{Proof}

\makeatletter
\renewenvironment{proof}[1][\textit{\proofname}]{\par
 \pushQED{\qed}%
 \normalfont \topsep.75\paraskip\relax
 \trivlist
 \item[\hskip\labelsep
 \itshape
 #1\@addpunct{.}]\ignorespaces
}{%
 \popQED\endtrivlist\@endpefalse
}
\makeatother

\newskip\paraskip
\paraskip=0.75ex plus .2ex minus .2ex

\newcounter{para}[section]
\setcounter{para}{0}
\renewcommand\thepara{\thesection.\arabic{para}}
\def\paragraph{%
 \noindent
 \refstepcounter{para}%
 \textbf{\thepara.}\hspace{1ex}%
}

\newcommand\about[1]{%
 {\bfseries#1.}%
}

\newcommand\NN{\mathbb N}
\newcommand\CC{\mathbb C}
\newcommand\LL{\mathcal L}
\newcommand\RR{\mathcal R}

\newcommand\ot{\otimes}
\renewcommand\to{\longrightarrow}
\newcommand\g{\mathfrak g}
\newcommand\Id{\mathsf{Id}}


\newcommand\interval[1]{\underline{\mathbf #1}}
\newcommand\leftact{\triangleright}
\newcommand\rightact{\triangleleft}

\newcommand\re[1]{\overline{#1}}
\newcommand\retwo[1]{[\re{#1}]}
\newcommand\rethree[1]{\llbracket\re{#1}\rrbracket}

\DeclareMathOperator\End{End}
\DeclareMathOperator\Ret{Ret}
\DeclareMathOperator\mpl{mpl}

\title{A note on set theoretical solutions of the Yang-Baxter equation with 
trivial retraction}
\author{Pablo Zadunaisky}
\address{Instituto de Investigaciones Matemáticas Luis Santaló (IMAS) -- 
  CONICET
  Ciudad Universitaria – Pabellón I – Piso 2 – Oficina 2040
  (C1428EGA) – C.A.B.A., Argentina } 
\email{pzadunaisky@conicet.gov.ar}
\thanks{The author is a CONICET fellow. This work was supported in part by the 
projects UBACYT 20020170100613BA and PICT-2019-00099}

\begin{document}

\begin{abstract}
We show that every finite non-degenerate set theoretical solution to the YBE 
whose retraction is a flip linearizes to a twist of the flip solution by roots 
of unity. This generalizes a result of Gateva-Ivanova and Majid. To prove the 
result we use a new invariant associated to a solution, its Lie algebra. We 
show also that a solution retracts to a flip solution if and only if its Lie 
algebra is abelian.

\noindent \textbf{Keywords:} Set theoretical YBE, Lie algebras, non commutative algebras. 
\end{abstract}

\maketitle

\section{Introduction}
A set theoretical solution to the Yang-Baxter equation is a pair $(X,r)$ where
$X$ is a set and $r: X \times X \to X \times X$ is a map satisfying
\begin{align*}
r \times \Id_X \circ \Id_X \times r \circ r \times \Id_X
  &=  \Id_X \times r \circ r \times \Id_X \circ  \Id_X \times r.
\end{align*}
Their study was suggested by Drinfeld in \cite{Drinfeld90}, and begun in 
earnest with the seminal works of Gateva-Ivanova and van den Bergh 
\cite{GIvdB98} and Etingof, Schedler and Soloviev \cite{ESS99}. To study these 
solutions one usually constructs associated algebraic objects, such as 
monoids, groups, quadratic algebras, Hopf algebras (all introduced in the 
cited papers) and more recently homology groups \cite{LV17}, braces 
\cite{Rump07}, and skew-braces \cite{GV17},among many others. For an overview 
of the literature and open problems in the area, and its connections to other 
areas of mathematics, we refer the reader to \cite{Vendramin19}.

The space of all set theoretical solutions of the YBE is vast and one usually 
focuses on special classes to avoid drowning in it. A typical result will 
prove that the nice combinatorial properties of the solutions are reflected in 
nice properties of the associated algebraic objects. Common families to study 
are involutive solutions, or solutions of small size, etc. This paper is 
concerned with a family of solutions which is close to trivial in the sense 
that its retraction is a flip solution (see section \ref{s:generalities} for 
definitions). These solutions have attracted much interest recently, mostly in the involutive case (i.e. $r^2 = \Id_{X \times X}$), see for example 
\cites{Rump22,JPZ20,JPZ21,JP23}. 

We present two results on solutions whose retraction is a flip solution, but 
are not assumed to be involutive. 
First, we show that the multipermutation level is reflected in the structure 
of the linearized solution, which must be a flip solution twisted by roots of 
unity. This result was already observed by Gateva-Ivanova and Majid for 
involutive square-free solutions in \cite{GIM11}*{Theorem 4.9}. Second, we 
introduce a new algebraic invariant, the Lie algebra associated to a solution, 
and show that a solution retracts to a flip solution if and only if its 
associated Lie algebra is abelian.

\subsubsection*{Convention.}
All vector spaces and tensor products are over $\CC$. Throughout this paper $X$
will denote a finite set.

\section{Generalities on solutions}
\label{s:generalities}
\paragraph
\about{Quadratic sets}
Following \cites{GI04a,GIM08}, a \emph{quadratic set} is a pair $(X,r)$ where 
$X$ is a set and $r: X \times X \to X \times X$ is a bijective map; its 
cardinality is the cardinality of $X$. For a fixed quadratic set $(X,r)$ and 
$x,y \in X$ we denote by $\LL_x, \RR_y$ the functions defined implicitly by
\[r(x,y) 
  = (\LL_x(y), \RR_y(x)).\]
It is sometimes useful to see $\LL_x$ and $\RR_y$ as defining actions of $X$
on itself, so we also use the alternative notation
\[
  r(x,y) = (x \leftact y, x \rightact y)  
\]
The quadratic set is \emph{non-degenerate} if the maps $\LL_x$ and $\RR_y$ are 
bijective for each $x \in X$, it is \emph{involutive} if $r^2 = \Id_X$, it is 
\emph{square-free} if $r(x,x) = (x,x)$ for all $x \in X$, and finally it is 
\emph{finite} if the set $X$ is finite. As per our convention, in this paper 
we will only consider finite quadratic sets.

Set $r_{1} = r \times \Id_X$ and $r_{2} = \Id_X \times r$, which are 
functions of $X \times X \times X$ to itself. The quadratic set $(X,r)$ is a 
solution to the set theoretical Yang-Baxter equation, or to be brief just 
\emph{a solution}, if $r_{1} \circ r_{2} \circ r_{1} = r_{2} \circ r_{1} \circ 
r_{2}$.

\paragraph
\about{Examples}
\label{examples}
Suppose $X$ is any finite set. If we choose arbitrary functions $f,g:X \to X$
then $r: X \times X \to X \times X$ given by $r(x,y) = (f(y),g(x))$ is a 
quadratic set. It is a solution if and only if $f$ and $g$ commute, it is non 
degenerate if and only if both $f$ and $g$ are bijective, and it is involutive 
if and only if $g = f^{-1}$. These solutions are known as \emph{permutation 
solutions} and were suggested by Lyubashenko \cite{Drinfeld90}. The 
\emph{flip} solution on $X$ is the permutation solution with $f = g = \Id_X$,
that is the function $r(x,y) = (y,x)$. 

We will use the following two examples in the sequel. For $n \in \NN$ we 
denote by $\interval n$ the set $\{1,2,\ldots, n\}$. Take $s: \interval 2 
\times \interval 2 \to \interval 2 \times \interval 2$ given by 
\begin{align*}
\begin{tabular}{c|cc}
$s$ & $1$ & $2$ \\ 
\hline
$1$ & $(2,2)$ & $(1,2)$ \\
$2$ & $(2,1)$ & $(1,1)$ \\
\end{tabular}
\end{align*}
This solution is involutive and non-degenerate but not square-free. It is a
permutation solution with $f = g = (12)$.

Now take $t: \interval 8 \times \interval 8 \to \interval 8 \times \interval 8$
to be the solution with
\begin{align*}
\begin{tabular}{c|c:c:c:c}
$x$ & $1$ \  $2$ & $3$ \  $4$ & $5$ \  $6$ & $7$ \  $8$\\
\hline
$\LL_x$ & $e$ & $(34)(78)$ & $(34)(78)$ & $e$ \\
$\RR_x$ & $e$ & $(34)(56)$ & $e$ & $(34)(56)$
\end{tabular}
\end{align*}
A long but direct computation shows that this is a solution, which is clearly 
not a multipermutation solution. Since $t^2(3,5) = t(5,3) = (4,6)$, $t$ is not 
an involutive solution, and since $t(3,3) = (4,4)$ it is not square-free.

\begin{Remark*}
In the literature the expression \emph{trivial} solution is reserved for flip 
solutions with $|X| \geq 2$, while the only solution with $|X| = 1$ is a 
one-element solution. To avoid clashing with the established notation we will 
use the expression ``flip solutions'' to encompass both trivial and 
one-element solutions.
\end{Remark*}

\paragraph
\about{Retractions}
We define an equivalence relation in $X$ denoted by $\sim$, where $x \sim y$ 
if and only if $\LL_x = \LL_y$ and $\RR_x = \RR_y$. We denote by $\re x$ the 
equivalence class of $x$ and by a slight abuse of notation we write 
$\LL_{\re x} = \LL_x$ and $\RR_{\re x} = \RR_x$. The following lemma is 
standard. It was proved for non-degenerate involutive solutions by Etingof, 
Schedler and Soloviev \cite{ESS99}*{Subsection 3.2}. We denote by $\re X$ 
the quotient $X /\!\sim$. 

\begin{Lemma}
Suppose $(X,r)$ is a quadratic set. Then $(X,r)$ is a solution if and only if 
the following equalities hold in $\End(V)$
\begin{align*}
\LL_{\re x} \circ \LL_{\re y}
  &= \LL_{\re{x \leftact y}} \circ 
    \LL_{\re{x \rightact y}}; \\
\RR_{\re z} \circ \RR_{\re y} 
  &= \RR_{\re{y \rightact z}} \circ 
    \RR_{\re{y \leftact z}}; \\
\RR_{\re{(x \rightact y) \leftact z}} \circ \LL_{\re x} \circ 
    p_{\re y}
  &= \LL_{\re{x \rightact (y \leftact z)}} \circ \RR_{\re z} \circ 
    p_{\re y}.
\end{align*}
If $r$ is non-degenerate, it induces a map $\re r: \re X \times \re X \to 
\re X \times \re X$ given by $\re r(\re x, \re y) = (\re{x \leftact y}, 
\re{x \rightact y})$ and the quadratic set $(\re X, \re r)$ is also a 
solution.
\end{Lemma}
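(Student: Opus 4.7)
The plan is to unpack the YBE coordinate by coordinate in the notation $r(x,y) = (x \leftact y, x \rightact y)$ and then match each coordinate identity with one of the displayed operator equations. A direct computation gives
\[
r_1 r_2 r_1(x,y,z) = \bigl( (x \leftact y) \leftact ((x \rightact y) \leftact z),\ (x \leftact y) \rightact ((x \rightact y) \leftact z),\ (x \rightact y) \rightact z \bigr)
\]
and
\[
r_2 r_1 r_2(x,y,z) = \bigl( x \leftact (y \leftact z),\ (x \rightact (y \leftact z)) \leftact (y \rightact z),\ (x \rightact (y \leftact z)) \rightact (y \rightact z) \bigr),
\]
so $(X,r)$ is a solution iff three pointwise identities hold on $X^3$, one per coordinate. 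Since $X$ is a basis of $V$, each displayed operator equation is equivalent to its evaluation on all basis vectors. A quick inspection shows that the first displayed equation applied to $z \in X$ recovers the first-coordinate identity, and the second displayed equation applied to $x \in X$ recovers the third-coordinate identity.

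The third equation is subtler: the labels $\re{(x \rightact y) \leftact z}$ and $\re{x \rightact (y \leftact z)}$ depend on $y$ only through $\LL_y$ and $\RR_y$, hence only through the class $\re y$, which is why the projection $p_{\re y}$ is natural. Evaluating this equation on any representative $y' \in \re y$ recovers the middle-coordinate identity for the triple $(x,y',z)$, and letting $(x,y,z)$ range over $X^3$ exhausts all pointwise instances. This establishes the first assertion.

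For the second assertion, suppose $r$ is bijective. It suffices to show that $\re{x \leftact y}$ and $\re{x \rightact y}$ depend only on $(\re x, \re y)$. Because $\LL_x = \LL_{x'}$ immediately yields $x \leftact y = x' \leftact y$, and $\RR_y = \RR_{y'}$ immediately yields $x \rightact y = x \rightact y'$, the only nontrivial checks are (a) $y \sim y' \Rightarrow x \leftact y \sim x \leftact y'$ and (b) $x \sim x' \Rightarrow x \rightact y \sim x' \rightact y$. For the $\LL$-halves, substituting $y \sim y'$ into the first operator equation and cancelling the invertible factor $\LL_{x \rightact y}$ gives $\LL_{x \leftact y} = \LL_{x \leftact y'}$, and a symmetric argument on the second equation handles the $\RR$-half of (b). The main obstacle is the remaining two compatibilities, namely the $\RR$-half of (a) and the $\LL$-half of (b); I would deduce them by applying the analogous cancellations to the operator equations written for the inverse solution $r^{-1}$, which is again a set-theoretic solution because $r$ is bijective. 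Once $\re r$ is well-defined, the three operator equations descend verbatim to $\re X$, and the first part of the lemma implies $\re r$ is a solution.
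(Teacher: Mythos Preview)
Your treatment of the first assertion is correct and matches the paper's argument: both compute $r_1r_2r_1$ and $r_2r_1r_2$ coordinatewise and identify each coordinate with one of the three operator identities, using $p_{\re y}(y')=y'$ for $y'\in\re y$ to handle the middle one.

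For the second assertion your reduction to the four compatibilities is right, and your direct cancellations for the $\LL$-half of (a) and the $\RR$-half of (b) are exactly what is needed. The detour through $r^{-1}$ for the remaining two, however, is both unnecessary and not clearly justified as stated: the operator equations for $r^{-1}$ involve its own left/right maps $\hat\LL,\hat\RR$ and its own equivalence relation, and you have not explained how a cancellation there yields a statement about $\RR_{x\leftact y}$ or $\LL_{x\rightact y}$ for $r$. The paper avoids this entirely by cancelling on the \emph{other} side of the very same two equations you already used. From
\[
\LL_{\re x}\circ\LL_{\re y}=\LL_{\re{x\leftact y}}\circ\LL_{\re{x\rightact y}},
\]
if $x\sim x'$ then $\LL_{\re x}=\LL_{\re{x'}}$ and $x\leftact y=x'\leftact y$, so left-cancelling $\LL_{\re{x\leftact y}}$ gives $\LL_{\re{x\rightact y}}=\LL_{\re{x'\rightact y}}$; this is your ``$\LL$-half of (b)''. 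Symmetrically, from $\RR_{\re y}\circ\RR_{\re x}=\RR_{\re{x\rightact y}}\circ\RR_{\re{x\leftact y}}$, if $y\sim y'$ then $\RR_{\re y}=\RR_{\re{y'}}$ and $x\rightact y=x\rightact y'$, so left-cancelling $\RR_{\re{x\rightact y}}$ gives the ``$\RR$-half of (a)''. Thus all four compatibilities come from the first two operator identities alone, with one right-cancellation and one left-cancellation each; no appeal to $r^{-1}$ is needed.
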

\begin{proof}
For each $x,y,z \in X$ we have
\begin{align*}
r_1 r_2 r_1(x,y,z)
  &= (\LL_{\re{x \leftact y}}(\LL_{\re{x \rightact y}}(z)),
      \RR_{\re{(x \rightact y) \leftact z}}(\LL_{\re x}(y)),
      \RR_{\re z}(\RR_{\re y}(x))); \\
r_2 r_1 r_2(x,y,z)
  &= (\LL_{\re x} (\LL_{\re y}(z)), 
      \LL_{\re{x \rightact (y \leftact z)}}(\RR_{\re z}(y)),
      \RR_{\re{y \rightact z}}(\RR_{\re{y \leftact z}}(x))).
\end{align*}
Since $y = p_{\overline y}(y)$ the equalities in the statement are equivalent
to the fact that $r$ is a solution to the YBE.

Suppose now that $r$ is non-degenerate if $\re x = \re x'$ then $\re{x 
\leftact y} = \re{x' \leftact y}$. Thus
\[
\LL_{\re{x \rightact y}} 
= \LL_{\re x} \circ \LL_{\re y} \circ (\LL_{\re{x \leftact y}})^{-1}
= \LL_{\re{x'}} \circ \LL_{\re y} \circ (\LL_{\re{x' \leftact y}})^{-1} =
\LL_{\re{x' \rightact y}}
\]
and similarly $\RR_{\re{x \rightact y}} = \RR_{\re{x' \rightact y}}$. Thus 
$\re{x \rightact y}$ depends only on $\re y$ and $\re x$. Similarly
$\re{y \leftact x} = \re{y \leftact x'}$ and this result only depends on 
$\re y$ and $\re x$. This implies that the map $\re r: \re X \times \re X \to
\re X \times \re X$ given by $\re r(\re x, \re y) = (\re{x \leftact y}, 
\re{x \rightact y})$ is well-defined. Since $r$ is a solution, so is 
$\overline r$.
\end{proof}
The solution $\Ret(X,r) = (\re X, \re r)$ is called the \emph{retraction} of
$(X,r)$. We say that a finite solution $(X,r)$ is retractable if $|\re X| < 
|X|$. It may happen that the retraction of a retractable solution is itself 
retractable, so we set inductively $\Ret^{n+1}(X,r) = \Ret^n(\re X, \re r)$. 
The \emph{multipermutation level} of $(X,r)$, denoted $\mpl(X,r)$, is the 
infimum of all $n \in \NN$ such that $\Ret^n(X,r)$ is a solution of 
cardinality $1$. Notice that a solution is a permutation solution if and only 
if it is of multipermutation level $1$.

\begin{Example*}
Any permutation solution retracts to a one-element solution (which under our 
convention is a flip solution). Example $t$ from \ref{examples} has $\re 1 = 
\re 2, \re 3 = \re 4, \re 5 = \re 6$ and $\re 7 = \re 8$. The solution 
$\Ret(X,r)$ is the flip solution over the set of classes. 
\end{Example*}

\begin{Remark*}
There is ambiguity in the notation $\LL_{\re x}$, as it could be a function 
from $X$ to itself or from $\re X$ to itself. This issue will not arise in the 
sequel as we will always work at the level of $X$ and its linearization $V$.
\end{Remark*}

\paragraph
\about{Associated vector space}
We denote by $V$ the $\CC$-span of $X$. The space $V$ is a Hermitian space 
with the unique inner product making $X$ an orthogonal basis. Given a set $S 
\subset V$ we denote by $p_S$ the orthogonal projection to the space generated 
by $S$. If $(X,r)$ is a quadratic set the maps $\LL_{\re x}$ and $\RR_{\re x}$ 
induce linear endomorphisms of $V$, which we denote by the same symbols. 
Finally, we define the map $R: V \otimes V \to V \otimes V$ as the obvious 
linear extension of $r$, namely $R(x \otimes y) = u \otimes v$ whenever 
$r(x,y) = (u,v)$ for $x,y,u,v \in X$.  

\paragraph
\about{Formulas}
\label{formulas}
From now on we assume that $(X,r)$ is a non-degenerate solution. To 
avoid overly complicated notation we denote by $\retwo{x}$ the class of $\re 
x$ in  $\Ret^2(X, r)$, and by $\rethree x$ the class of $\retwo x$ in 
$\Ret^3(X, r)$. Since $x \leftact y$ depends only on the class of $x$ it makes 
sense to write $\re x \leftact y$, and by a similar reasoning $\re{\re x 
\leftact y} = \retwo{x} \leftact \re y$, etc. We can thus rewrite the 
conditions on the previous lemma as
\begin{align*}
\LL_{\re x} \circ \LL_{\re y}
  &= \LL_{\retwo x \leftact \re y} \circ 
    \LL_{\re x \rightact \retwo y}; \\
\RR_{\re z} \circ \RR_{\re y} 
  &= \RR_{\re y \rightact \retwo z} \circ 
    \RR_{\retwo y \leftact \re z}; \\
\RR_{(\retwo x \rightact \rethree y) 
  \leftact \re z} \circ \LL_{\re x} \circ 
    p_{\re y}
  &= \LL_{\re x \rightact (\rethree y \leftact 
  \retwo z)} \circ \RR_{\re z} \circ 
    p_{\re y}.
\end{align*}
We also point out that
\begin{align*}
\LL_{\re x} \circ p_{\re y} 
  &= p_{\retwo x \leftact \re y} \circ \LL_{\re x};
&\RR_{\re z} \circ p_{\re y} 
  &= p_{\re y \rightact \retwo z} \circ \RR_{\re z};
&p_{\re x} \circ p_{\re y} = p_{\re y} \circ p_{\re x}
= \delta_{\re x, \re y} p_{\re x}.
\end{align*}
Finally, we can write $R$ as
\[
  R \circ \tau 
    = \sum_{\re x \in \re X} \sum_{\re y \in \re X}
    \LL_{\re x} \circ p_{\re y} \ot \RR_{\re y} \circ 
    p_{\re x} 
\]
where $\tau$ is the flip map given by $\tau(v \ot w) = w \ot v$. To see
this, notice that given $v,w \in X$ we have
\begin{align*}
\bigg(\sum_{\re x \in \re X} \sum_{\re y \in \re X}
    &\LL_{\re x} \circ p_{\re y} \ot \RR_{\re y} \circ 
    p_{\re x} \bigg)\tau(v \ot w)\\
    &= \sum_{\re x \in \re X} \sum_{\re y \in \re X}
    \LL_{\re x} \circ p_{\re y}(w) \ot \RR_{\re y} \circ 
    p_{\re x}(v) \\
    &= \LL_{\re v}(w) \ot \RR_{\re w}(v)
    = R(v \ot w),
\end{align*}
so the desired equality holds over a basis of $V \ot V$.

\section{Multipermutation level $2$}
Recall that here and below $(X,r)$ denotes a finite non-degenerate solution.

\paragraph\about{Formulas for multipermutation level $2$}
If $\mpl(X,r) \leq 2$ then $\retwo y = X$. Also there exist $f,g: \re X \to 
\re X$ such that $\re r(\re x, \re y) = (f(\re y), g(\re x))$ so the formulas 
above simplify to
\begin{align*}
\LL_{\re x} \circ \LL_{\re y}
  &= \LL_{f(\re y)} \circ \LL_{g(\re x)}; 
& \RR_{\re z} \circ \RR_{\re y} 
  &= \RR_{g(\re y)} \circ \RR_{f(\re z)}; 
&\RR_{f(\re z)} \circ \LL_{\re x} 
  &= \LL_{g(\re x)} \circ \RR_{\re z};\\
\LL_{\re x} \circ p_{\re y} 
  &= p_{f(\re y)} \circ \LL_{\re x}
&\RR_{\re x} \circ p_{\re y} 
  &= p_{g(\re y)} \circ \RR_{\re x}
\end{align*}
In particular, if $\Ret(X,r)$ is a flip solution, or equivalently if $f = 
g = \Id_{\re X}$, then the maps $\LL_{\re x}, \RR_{\re y}, p_{\re z}$ commute 
for all $x,y,z \in X$.

\paragraph
The following result is a generalization of \cite{GIM11}*{Theorem 4.9} to 
non-involutive solutions. Also, we replace the hypothesis that $r$ is 
square-free and of multipermutation level $2$ with the weaker hypothesis that 
the first retraction is a flip solution. 
\begin{Proposition}
\label{P:q-polynomials}
Let $(X,r)$ be a finite non-degenerate solution, and suppose $\Ret(X,r)$ is 
a flip solution. Then there exist a basis $\{v_1, \ldots, v_n\}$ of $V$ and a 
set of roots of unity $\mathbf q = (q_{i,j})_{1 \leq i,j \leq n} \subset \CC$ 
such that $R(v_j \otimes v_i) = q_{i,j} v_i \otimes v_j$. 
\end{Proposition}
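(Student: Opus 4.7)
The plan is to simultaneously diagonalize a commuting family of operators on $V$ and then read off the action of $R$ from the formula
\[
R \circ \tau = \sum_{\re x \in \re X} \sum_{\re y \in \re X} \LL_{\re x} \circ p_{\re y} \ot \RR_{\re y} \circ p_{\re x}
\]
recorded at the end of the formulas paragraph. The hypothesis that $\Ret(X,r)$ is trivial is used in two essential ways: it forces pairwise commutation of all the $\LL_{\re x}$, $\RR_{\re y}$ and $p_{\re z}$ (this was already observed in the multipermutation level $2$ paragraph), and it forces $\LL_{\re x}$ and $\RR_{\re y}$ to preserve each equivalence class, since triviality of the retraction says $\re{x \leftact y} = \re y$ and $\re{y \rightact x} = \re y$ for all $x,y$.

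Next I would check that each operator in the family is individually diagonalizable. Each $p_{\re z}$ is an orthogonal projection, so it is self-adjoint with eigenvalues in $\{0,1\}$. For $\LL_{\re x}$ (and symmetrically $\RR_{\re y}$), the previous observation shows that it restricts to each $\re y$, and non-degeneracy of $r$ means that this restriction is a bijection of the finite set $\re y$. Hence, in the orthonormal basis $X$, $\LL_{\re x}$ is a permutation matrix: in particular it is unitary with eigenvalues that are roots of unity. Because a commuting family of diagonalizable endomorphisms admits a common eigenbasis, I can pick $v_1,\dots,v_n \in V$ that simultaneously diagonalize every member of the family, and write $\LL_{\re x} v_i = \lambda_{x,i} v_i$, $\RR_{\re y} v_i = \mu_{y,i} v_i$; moreover, since the $p_{\re z}$ are orthogonal projections summing to $\Id$, for each $i$ there is a unique class $\re{z(i)}$ with $p_{\re{z(i)}} v_i = v_i$ and $p_{\re z} v_i = 0$ otherwise.

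Plugging $v_i \ot v_j$ into the formula above kills every summand except the one with $\re y = \re{z(i)}$ and $\re x = \re{z(j)}$, which yields
\[
R(v_j \ot v_i) = \lambda_{z(j),i}\, \mu_{z(i),j}\, v_i \ot v_j.
\]
Setting $q_{j,i} := \lambda_{z(j),i}\, \mu_{z(i),j}$ gives the formula in the statement, and each $q_{j,i}$ is a product of eigenvalues of finite-order unitary operators, hence a root of unity.

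The main obstacle in this plan is the diagonalizability step: commutation by itself does not yield a common eigenbasis, so one really has to exploit the fact that triviality of the retraction forces $\LL_{\re x}$ and $\RR_{\re y}$ to be block-diagonal with permutation blocks (one per equivalence class) in the basis $X$. Once that is in hand, simultaneous diagonalization and the bookkeeping on the summation formula for $R \circ \tau$ are routine.
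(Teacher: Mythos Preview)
Your argument is correct and follows essentially the same route as the paper: observe that triviality of $\Ret(X,r)$ makes the family $\{\LL_{\re x}, \RR_{\re y}, p_{\re z}\}$ commute, simultaneously diagonalize, and read off $R$ from the summation formula for $R\circ\tau$. The one superfluous detour is your block-diagonal analysis for diagonalizability: since $r$ is non-degenerate and $X$ is finite, each $\LL_{\re x}$ is already a permutation matrix in the basis $X$ (hence diagonalizable with root-of-unity eigenvalues) without any appeal to the class-preserving property, so triviality of the retraction is needed only for commutation, not for individual diagonalizability.
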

\begin{proof}
Since $p_{\re z}$ is a projector it is diagonalizable, and its 
eigenvalues are either $1$ or $0$. Since $\LL_{\re x}$ and 
$\RR_{\re y}$ are induced by bijections of the set $X$, they are also 
diagonalizable and their eigenvalues are roots of unity. As mentioned above 
the fact that $\Ret(X,r)$ is a flip solution means that the maps $\LL_{\re x}, 
\RR_{\re y}$ commute, and they commute with $p_{\re z}$, for all 
$\re x, \re y, \re z \in \re X$. Also since $X$ is the 
disjoint union of the classes $\re z$ the projectors $p_{\re z}$ 
are orthogonal and add up to the identity, so they also commute with each 
other.

By a standard result of linear algebra the fact that these linear operators 
are diagonalizable and commute implies that they are \emph{simultaneously} 
diagonalizable, i.e. there exists a basis of $V$ consisting of eigenvectors of 
all these maps, say $\{v_1, \ldots, v_n\}$. Write $\lambda_{i,\re x}$, resp. 
$\mu_{i,\re y}$, for the eigenvalue of $v_i$ with respect to $\LL_{\re x}$, 
resp. $\RR_{\re y}$. Using the fact that $R = \left( \sum_{\re x, \re y \in 
\re X} \LL_{\re x} \circ p_{\re y} \ot \RR_{\re y} \circ p_{\re x} \right) 
\circ \tau$ and that for each $i$ there is exactly one $\re{z(i)}$ such that 
$p_{\re{z(i)}}(v_i) \neq 0$ we get
\begin{align*}
R(v_j \ot v_i) = \lambda_{i,\re{z(j)}} \mu_{j,\re{z(i)}} v_i \ot v_j
\end{align*}
Since $\lambda_{i,\re{z(j)}}$ and $\mu_{j,\re{z(i)}}$ are roots of unity, so 
is their product and we are done.
\end{proof}

Notice that this proof is the only place where we use that the base field is 
$\CC$, to guarantee that the minimal polynomial of the linear transformation 
induced by a permutation splits. 

\paragraph
The algebra $Q(X,r)$ associated to a solution is the quotient of the tensor 
algebra $T(V)$ by the ideal generated by the image of the map $\Id_{V \otimes 
V} - R$. Following \cite{Manin88} this algebra is called the Yang-Baxter 
algebra associated to the solution. As shown in \cite{GIvdB98}*{Corollary 
1.5}, this algebra is integral whenever $(X,r)$ is involutive. In fact by 
\cite{JKVA19}*{Theorem 4.5} if $(X,r)$ is finite, bijective and left 
non-degenerate then $Q(X,r)$ is integral if and only if $(X,r)$ is involutive.

In the special case where $(X,r)$ is a finite non-degenerate solution whose 
retraction is a flip solution then by Proposition \ref{P:q-polynomials} its
Yang-Baxter algebra $Q(X,r)$ is isomorphic to the $q$-polinomial algebra 
$\CC_{\mathbf q}[v_1, \ldots, v_n]$, where $\mathbf q = 
(q_{i,j})_{1 \leq i,j \leq n}$ are the roots of unity from the statement, and 
the relations $v_j v_i = q_{i,j} v_i v_j$ hold for all $i,j$. This allows us
to check that this is a domain if and only if $(X,r)$ is involutive very 
easily. 

\begin{Corollary}
\label{c:q-pols}
Let $(X,r)$ be a finite non-degenerate solution, and suppose 
$\Ret(X,r)$ is a flip solution. Then $Q(X,r)$ is finite over its center. 
Furthermore, $Q(X,r)$ is a domain if and only if $(X,r)$ is involutive.
\end{Corollary}
\begin{proof}
As we have already observed, $Q(X,r)$ is the quotient of $T(V)$ by the 
relations $v_i v_j = q_{j,i} v_j v_i$. Now if $d_i$ is the minimal natural 
number such that $q_{i,j}^{d_i} = 1$ for all $j$, then $y_i = v_i^{d_i}$ is a 
central element in $Q(X,r)$. Thus $\Gamma = \CC[y_1, \ldots, y_n] \subset 
Q(X,r)$ is a central subalgebra and $Q(X,r)$ is a finite module over $\Gamma$.

The algebra $\CC_{\mathbf q}[v_1, \ldots, v_n]$ is integral if and only if
$q_{i,i} = 1$ and $q_{i,j} = q_{j,i}^{-1}$ for all $1 \leq i,j, \leq n$ (see 
for example \cite{MR01}*{Chapter 1 \S 6}). Now
\begin{align*}
R^2(v_i \otimes v_j) &= q_{i,j}q_{j,i} v_i \otimes v_j,
\end{align*}
so $(X,r)$ is involutive if and only if $q_{i,j} = q_{j,i}^{-1}$. Thus if
$Q(X,r)$ is integral then $(X,r)$ is involutive, and if $(X,r)$ is involutive
we only need to show that $q_{i,i} = 1$. 

Let $v_i$ be any of the vectors in the eigenbasis from the proposition. Then
$v_i$ lies in the span of $\re x$ for one and only one $\re x \in \re X$. The 
hypothesis that $\Ret(X,r)$ is a flip solution implies that $(\re x, 
r|_{\re x \times \re x})$ is a solution, and indeed a permutation solution 
since $r|_{\re x \times \re x} = \LL_{\re x}|_{\re x} \times \RR_{\re 
x}|_{\re x}$. Furthermore, if $(X,r)$ is involutive then so is $(\re x, 
r|_{\re x \times \re x})$, and this implies $\RR_{\re x}|_{\re x} = 
(\LL_{\re x}|_{\re x})^{-1}$. This equality also holds for the linear 
extensions of these maps, and so in the notation of the proposition
\begin{align*}
R(v_i \otimes v_i)
  &= \LL_{\re x}(v_i) \otimes \RR_{\re x}(v_i)
  = \LL_{\re x}(v_i) \otimes (\LL_{\re x})^{-1}(v_i)
  = \lambda_{i,\re x} v_i \otimes \lambda_{i, \re x}^{-1} v_i
  = v_i \otimes v_i,
\end{align*}
so $q_{i,i} = 1$.
\end{proof}

The Yang-Baxter algebra has been studied intensively, notably by 
Gateva-Ivanova, see for example the articles \cites{GIvdB98,GI04,GI04a,GI12,
GI18,GI23} and the references therein. In particular \cite{GI18} gives 
conditions on the algebraic structures of an involutive solution $(X, r)$ (not 
necessarily finite) which determine the multipermutation level of $(X, r)$, 
and the last sections contains several results on multipermutation solutions 
of level $2$. 

\begin{Remark*}
Similar results hold for the Hopf algebra associated to the solution as 
defined in \cite{ESS99}*{section 2.9}). These results appear in the MsC thesis 
of Agust\'\i n Mu\~noz \cite{Munoz}. 
\end{Remark*}

\paragraph
\about{Examples}
Recall the involutive, non square-free solution $(\interval 2, s)$ from 
\ref{examples}. Taking $v_1 = x_1 + x_2$ and $v_2 = v_1 - v_2$ the 
corresponding linear solution is given by 
\begin{align*}
\begin{tabular}{c|cc}
$R$ & $v_1$ & $v_2$ \\ 
\hline
$v_1$ & $v_1 \otimes v_1$ & $-v_2 \otimes v_1$ \\
$v_2$ & $-v_1 \otimes v_2$ & $v_2 \otimes v_2$ \\
\end{tabular}
\end{align*}
The corresponding Yang-Baxter algebra is then the polynomial algebra in 
anticommuting variables.

Now consider solution $(\interval 8, t)$, which has multipermutation level 
$2$ and is neither involutive nor square free. By Corollary \ref{c:q-pols} 
$Q(X,r)$ is a $q$-polynomial ring but not a domain. Using the notation from 
the proposition we have eigenvectors 
\begin{align*}
v_1 &= x_1; 
  &v_2 &= x_2; 
  &v_3 &= x_3 + x_4;
  &v_4 &= x_3 - x_4;\\
v_5 &= x_5 + x_6;
  &v_6 &= x_5 - x_6;
  &v_7 &= x_7 + x_8;
  &v_8 &= x_7 - x_8.
\end{align*}
All $\lambda_{i, \re x}$ and $\mu_{i, \re x}$ are $1$ except
\begin{align*}
\lambda_{4,\re 3} &= \lambda_{4, \re 5} = \lambda_{8,\re 3} 
  = \lambda_{8, \re 5} = -1\\
\mu_{4,\re 3} &= \mu_{4, \re 7} = \mu_{6,\re 3} = \mu_{6, \re 7} =-1 
\end{align*}
Thus for example $q_{3,8} = \lambda_{3, \re 8} \mu_{8, \re 3} = 1$ but 
$q_{8,3} = \lambda_{8, \re 3} \mu_{3, \re 8} = -1$. This implies $v_8v_3 = 
v_3v_8 = -v_8v_3 = -v_3v_8$ from which follows that $v_8v_3 = v_3v_8 = = 0$.

\section{A Lie algebra associated to a solution}
Throughout this section $(X,r)$ denotes a finite non-degenerate solution. We 
associate to $(X,r)$ a Lie algebra contained in $\mathfrak{gl}(V)$ and show 
that it characterizes solutions whose retracion is a flip solution.

\begin{Definition}
The Lie algebra associated to the solution $(X,r)$, denoted by $\g(X,r)$, is 
the Lie subalgebra of $\mathfrak{gl}(V)$ generated by the operators 
$\{\LL_{\re x} \circ p_{\re y}, \RR_{\re x} \circ p_{\re y} \mid \re x, \re y 
\in \re X\}$.
\end{Definition}

\paragraph
From the formulas in \ref{formulas} we deduce the commutation formulas
\begin{align*}
[\LL_{\re x} \circ p_{\re y}, \LL_{\re z} \circ p_{\re w}]
  &= p_{\retwo x \leftact \re y} 
    \circ p_{\rethree x \leftact (\retwo z \leftact \re w)}
    \circ \LL_{\re x} \circ \LL_{\re z} - p_{\retwo z \leftact \re w} 
    \circ p_{\rethree z \leftact (\retwo x \leftact \re y)}
    \circ \LL_{\re z} \circ \LL_{\re x},
\\
[\RR_{\re x} \circ p_{\re y}, \RR_{\re z} \circ p_{\re w}]
  &= p_{\re y \rightact \retwo x} 
    \circ p_{(\re w \rightact \retwo z) \rightact \rethree x}
    \circ \RR_{\re x} \circ \RR_{\re z} - p_{\re w \rightact \retwo z} 
    \circ p_{(\re y \rightact \retwo x) \rightact \rethree z}
    \circ \RR_{\re z} \circ \RR_{\re x},
\\
[\LL_{\re x} \circ p_{\re y}, \RR_{\re z} \circ p_{\re w}]
  &= p_{\retwo x \leftact y} 
    \circ p_{\rethree x \leftact(\re w \rightact \retwo z)} \circ 
      \LL_{\re x} \circ \RR_{\re z} -
    p_{\re w \rightact \retwo z} 
    \circ p_{(\retwo x \leftact \re y) \rightact \rethree z} \circ 
      \RR_{\re z} \circ \LL_{\re x}. 
\end{align*}
It als follows that the linear solution $R$
is a solution to the classical YBE in $\g(X,r)$. The crucial fact in the proof 
of Proposition \ref{P:q-polynomials} is that the operators $\{\LL_{\re x} 
\circ p_{\re y}, \RR_{\re z} \circ p_{\re w}\}$ commute when $\Ret(X,r)$ is a 
flip solution, so in this case $R$ is a solution to the YBE in an abelian Lie 
algebra. 

\begin{Theorem}
Let $(X,r)$ be a finite non-degenerate solution and let $\g = \g(X,r)$. Then 
the following are equivalent.
\begin{enumerate}[(a)]
\item \label{i:trivial-ret} The solution $\Ret(X,r)$ is a flip solution.
\item \label{i:q-poly} The span of the set $\{\LL_{\re x}, \RR_{\re x}, 
p_{\re x}\}$ is an abelian Lie subalgebra of $\mathfrak{gl}(V)$.
\item \label{i:g-abelian} The algebra $\g(X,r)$ is abelian.
\end{enumerate}
\end{Theorem}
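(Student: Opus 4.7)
My plan is to prove the cyclic chain (a) $\Rightarrow$ (b) $\Rightarrow$ (c) $\Rightarrow$ (a). The first two implications are essentially bookkeeping: for (a) $\Rightarrow$ (b), trivial retraction forces $\re{x \leftact y} = \re y$ and $\re{x \rightact y} = \re x$ at the level of classes, which collapses every relation in \pref{formulas} to a pairwise commutation $\LL_{\re x} \LL_{\re y} = \LL_{\re y} \LL_{\re x}$, $\RR_{\re x} \RR_{\re y} = \RR_{\re y} \RR_{\re x}$, $\LL_{\re x} \RR_{\re y} = \RR_{\re y} \LL_{\re x}$, $\LL_{\re x} p_{\re y} = p_{\re y} \LL_{\re x}$, and $\RR_{\re x} p_{\re y} = p_{\re y} \RR_{\re x}$; together with the obvious commutation of mutually orthogonal projections, this makes the span of $\{\LL_{\re x}, \RR_{\re x}, p_{\re x}\}$ an abelian Lie subalgebra of $\mathfrak{gl}(V)$. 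For (b) $\Rightarrow$ (c), the generators $\LL_{\re x} \circ p_{\re y}$ and $\RR_{\re x} \circ p_{\re y}$ of $\g(X,r)$ are products of pairwise commuting operators from the span in (b), so they commute, and the Lie algebra they generate is therefore abelian.

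For the substantive direction (c) $\Rightarrow$ (a), I would evaluate the vanishing commutator $[\LL_{\re x} \circ p_{\re y}, \LL_{\re z} \circ p_{\re w}]$ on a basis vector $v \in X$. Because $p_{\re u}$ acts on a basis element $w$ by the scalar $\delta_{\re u, \re w}$, both sides collapse to scalar multiples of $\LL_{\re x} \LL_{\re z}(v)$ and $\LL_{\re z} \LL_{\re x}(v)$, and the identity becomes
\[
\delta_{\re w, \re v}\,\delta_{\re y, \re{z \leftact v}}\,\LL_{\re x} \LL_{\re z}(v)
  = \delta_{\re y, \re v}\,\delta_{\re w, \re{x \leftact v}}\,\LL_{\re z} \LL_{\re x}(v).
\]
Specializing to $\re w = \re v$ and $\re y = \re{z \leftact v}$ makes the left-hand side the nonzero basis vector $\LL_{\re x} \LL_{\re z}(v)$, forcing the right-hand side to be nonzero as well, which gives $\re v = \re{z \leftact v}$ and $\re v = \re{x \leftact v}$. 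Since $\re x, \re z, v$ were arbitrary, this is precisely the statement that the left action on $\re X$ induced by $\re r$ is trivial. A symmetric computation with $[\RR_{\re x} \circ p_{\re y}, \RR_{\re z} \circ p_{\re w}]$ yields triviality of the right action, and therefore $\Ret(X,r)$ is the flip solution.

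The main obstacle is the visual complexity of the three-tier commutation identities at the start of Section 4; a natural first attempt would try to extract information by summing over class indices or by composing with inverses of $\LL_{\re x}$. The winning move is instead to evaluate pointwise on basis vectors, where every projection degenerates to an indicator function and the commutator relation reduces to a scalar equation between two explicit basis vectors, from which the triviality of the retracted actions can be read off by a judicious choice of the free class parameters. Non-degeneracy enters only at the very end, to ensure $\LL_{\re x} \LL_{\re z}(v)\in X$ is nonzero so that a vanishing LHS genuinely forces the corresponding Kronecker deltas on the RHS to vanish.
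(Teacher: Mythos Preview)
Your proof is correct and follows the same cyclic scheme as the paper, but the substantive implication (c) $\Rightarrow$ (a) is argued differently. The paper works with the \emph{mixed} commutator $[\LL_{\re x}\circ p_{\re y},\,\RR_{\re z}\circ p_{\re w}]$ at the operator level: choosing $w$ so that $y = w \rightact z$ (which uses surjectivity of $\RR_z$), the vanishing of the commutator equates two compositions of a single projection with invertible maps, and comparing images yields both $\retwo x \leftact \re y = \re y$ and $\re y \rightact \rethree z = \re y$ simultaneously. You instead use the two \emph{pure} commutators $[\LL p,\LL p]$ and $[\RR p,\RR p]$, evaluated on a basis vector so that each projection collapses to a Kronecker delta; specializing the free class parameters then forces the needed equalities directly. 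Your route is slightly more elementary and, contrary to your closing remark, does not actually invoke non-degeneracy in this implication: the vector $\LL_{\re x}\LL_{\re z}(v)$ lies in $X$ and is therefore a nonzero basis element simply because each $\LL_x$ maps $X$ into $X$, with no invertibility required. The paper's argument, by contrast, genuinely uses non-degeneracy both to solve $y = w \rightact z$ for $w$ and to assert that $\LL_{\re x}$ and $\RR_{\re z}$ are automorphisms of $V$.
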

\begin{proof}
The implication $(\ref{i:trivial-ret}) \Rightarrow (\ref{i:q-poly})$ was shown 
in the proof of Proposition \ref{P:q-polynomials}, and is also easy to deduce 
from the commutation formulas displayed above. The implication $(\ref{i:q-poly})
\Rightarrow (\ref{i:g-abelian})$ is immediate. 

Now suppose $\g(X,r)$ is abelian. Choose arbitrary elements $x,y,z \in X$ 
and choose $w \in X$ such that $y = w \rightact z$. Since $[\LL_{\re x} \circ 
p_{\re y}, \RR_{\re z} \circ p_{\re w}] = 0$ we deduce from the formulas
in \ref{formulas} that
\begin{align*}
p_{\retwo x \leftact y} \circ \LL_{\re x} \circ \RR_{\re z}
  &= p_{\re y} \circ p_{(\retwo x \leftact \re y) \rightact \rethree z} 
    \circ \RR_{\re z} \circ \LL_{\re x}.
\end{align*}
Since $\LL_{\re x}$ and $\RR_{\re z}$ are automorphisms of $V$, the morphism
on the left hand side of the equality is nonzero and its image is the subspace 
of $V$ spanned by $\retwo x \leftact \re y$. Thus the map on the right hand 
side is also nonzero, which implies that $\re y = (\retwo x \leftact \re y) 
\rightact \rethree z$, and its image is the subspace spanned by $\re y$, which
implies that $\re y = \retwo x \leftact \re y$. These 
imply that $\re y$ is a fixed point for the action of $\retwo x$ on the left
and for the action of $\retwo z$ on the right. Since $x,y,z$ were chosen 
arbitrarily, this means that $\LL_{\retwo x} = \RR_{\retwo z} = \Id_{\re V}$
and hence $\Ret(X,r)$ is a flip solution.
\end{proof}

We would like to have a more intrinsic definition of $\g(X,r)$ but so far
this is the best description we have. Computer experiments show that for all 
$|X| \leq 8$ and $r$ involutive the derived algebra of $\g(X,r)$ is semisimple 
of type $A$. Thus the multipermutation level is not directly related to the 
solubility or nilpotency of $\g(X,r)$, as one might suppose from the 
statement. We intend to return to this matter in future work.

\subsubsection*{Acknowledgements}
I thank Leandro Vendramin and Tatiana Gateva-Ivanova for reading and 
commenting on an early version of this article. I would also like to thank the 
referee for their comments. The computer experiments mentioned above were ran 
by Agust\'\i n Mu\~noz as part of his MsC. thesis, using Vendramin's 
\textsf{GAP} package YBE. 

\begin{bibdiv}
\begin{biblist}
\bib{Drinfeld90}{article}{
  author={Drinfel\cprime d, V. G.},
  title={On some unsolved problems in quantum group theory},
  conference={ title={Quantum groups}, address={Leningrad}, date={1990}, },
  book={ series={Lecture Notes in Math.}, volume={1510}, publisher={Springer, Berlin}, },
  date={1992},
  pages={1--8},
}

\bib{ESS99}{article}{
  author={Etingof, Pavel},
  author={Schedler, Travis},
  author={Soloviev, Alexandre},
  title={Set-theoretical solutions to the quantum Yang-Baxter equation},
  journal={Duke Math. J.},
  volume={100},
  date={1999},
  number={2},
  pages={169--209},
}

\bib{GI04}{article}{
  author={Gateva-Ivanova, Tatiana},
  title={Binomial skew polynomial rings, Artin-Schelter regularity, and binomial solutions of the Yang-Baxter equation},
  journal={Serdica Math. J.},
  volume={30},
  date={2004},
  number={2-3},
  pages={431--470},
  issn={1310-6600},
}

\bib{GI04a}{article}{
   author={Gateva-Ivanova, Tatiana},
   title={A combinatorial approach to the set-theoretic solutions of the
   Yang-Baxter equation},
   journal={J. Math. Phys.},
   volume={45},
   date={2004},
   number={10},
   pages={3828--3858},
   issn={0022-2488},
   doi={10.1063/1.1788848},
}

\bib{GI12}{article}{
  author={Gateva-Ivanova, Tatiana},
  title={Quadratic algebras, Yang-Baxter equation, and Artin-Schelter regularity},
  journal={Adv. Math.},
  volume={230},
  date={2012},
  number={4-6},
}

\bib{GI18}{article}{
   author={Gateva-Ivanova, Tatiana},
   title={Set-theoretic solutions of the Yang-Baxter equation, braces and
   symmetric groups},
   journal={Adv. Math.},
   volume={338},
   date={2018},
   pages={649--701},
   doi={10.1016/j.aim.2018.09.005},
}

\bib{GI23}{article}{
   author={Gateva-Ivanova, Tatiana},
   title={Segre products and Segre morphisms in a class of Yang-Baxter
   algebras},
   journal={Lett. Math. Phys.},
   volume={113},
   date={2023},
   number={2},
   pages={Paper No. 34, 34},
   issn={0377-9017},
   review={\MR{4562210}},
   doi={10.1007/s11005-023-01657-z},
}

\bib{GIM08}{article}{
   author={Gateva-Ivanova, Tatiana},
   author={Majid, Shahn},
   title={Matched pairs approach to set theoretic solutions of the
   Yang-Baxter equation},
   journal={J. Algebra},
   volume={319},
   date={2008},
   number={4},
   pages={1462--1529},
}

\bib{GIM11}{article}{
  author={Gateva-Ivanova, Tatiana},
  author={Majid, Shahn},
  title={Quantum spaces associated to multipermutation solutions of level two},
  journal={Algebr. Represent. Theory},
  volume={14},
  date={2011},
  number={2},
  pages={341--376},
}

\bib{GIvdB98}{article}{
  author={Gateva-Ivanova, Tatiana},
  author={van den Bergh, Michel},
  title={Semigroups of $I$-type},
  journal={J. Algebra},
  volume={206},
  date={1998},
  number={1},
  pages={97--112},
}

\bib{GV17}{article}{
  author={Guarnieri, L.},
  author={Vendramin, L.},
  title={Skew braces and the Yang-Baxter equation},
  journal={Math. Comp.},
  volume={86},
  date={2017},
  number={307},
  pages={2519--2534},
}

\bib{JP23}{article}{
   author={Jedli\v{c}ka, P\v{r}emysl},
   author={Pilitowska, Agata},
   title={Indecomposable involutive solutions of the Yang-Baxter equation of
   multipermutation level 2 with non-abelian permutation group},
   journal={J. Combin. Theory Ser. A},
   volume={197},
   date={2023},
   pages={Paper No. 105753, 35},
   doi={10.1016/j.jcta.2023.105753},
}

\bib{JPZ20}{article}{
   author={Jedli\v{c}ka, P\v{r}emysl},
   author={Pilitowska, Agata},
   author={Zamojska-Dzienio, Anna},
   title={The construction of multipermutation solutions of the Yang-Baxter
   equation of level 2},
   journal={J. Combin. Theory Ser. A},
   volume={176},
   date={2020},
   pages={105295, 35},
   issn={0097-3165},
   doi={10.1016/j.jcta.2020.105295},
}

\bib{JPZ21}{article}{
   author={Jedli\v{c}ka, P\v{r}emysl},
   author={Pilitowska, Agata},
   author={Zamojska-Dzienio, Anna},
   title={Indecomposable involutive solutions of the Yang-Baxter equation of
   multipermutational level 2 with abelian permutation group},
   journal={Forum Math.},
   volume={33},
   date={2021},
   number={5},
   pages={1083--1096},
   doi={10.1515/forum-2021-0130},
}

\bib{JKVA19}{article}{
   author={Jespers, Eric},
   author={Kubat, \L ukasz},
   author={Van Antwerpen, Arne},
   title={The structure monoid and algebra of a non-degenerate set-theoretic
   solution of the Yang-Baxter equation},
   journal={Trans. Amer. Math. Soc.},
   volume={372},
   date={2019},
   number={10},
   pages={7191--7223},
   issn={0002-9947},
}

\bib{LV17}{article}{
  author={Lebed, Victoria},
  author={Vendramin, Leandro},
  title={Homology of left non-degenerate set-theoretic solutions to the Yang-Baxter equation},
  journal={Adv. Math.},
  volume={304},
  date={2017},
  pages={1219--1261},
}

\bib{Manin88}{book}{
   author={Manin, Yu. I.},
   title={Quantum groups and noncommutative geometry},
   publisher={Universit\'{e} de Montr\'{e}al, Centre de Recherches
   Math\'{e}matiques, Montreal, QC},
   date={1988},
   pages={vi+91},
   isbn={2-921120-00-3},
}

\bib{MR01}{book}{ 
  author={McConnell, J. C.}, 
  author={Robson, J. C.}, 
  title={Noncommutative Noetherian rings}, 
  series={Graduate Studies in Mathematics}, 
  volume={30},
  edition={Revised edition}, 
  note={With the cooperation of L. W. Small},
  publisher={American Mathematical Society}, 
  place={Providence, RI}, 
  date={2001},
  pages={xx+636}, 
}

\bib{Munoz}{book}{
  author={Mu\~noz, Agust\'\i n},
  title={\'Algebras cuantizadas y \'algebras de Lie asociadas a soluciones conjuntistas de la ecuaci\'on de Yang-Baxter (in spanish)},
  note={MsC thesis, available at \url {http://cms.dm.uba.ar/academico/carreras/ licenciatura/tesis/2020/AMunoz.pdf}},
  year={2020},
}

\bib{Rump07}{article}{
  author={Rump, Wolfgang},
  title={Braces, radical rings, and the quantum Yang-Baxter equation},
  journal={J. Algebra},
  volume={307},
  date={2007},
  number={1},
  pages={153--170},
  issn={0021-8693},
}

\bib{Rump22}{article}{
   author={Rump, Wolfgang},
   title={Classification of non-degenerate involutive set-theoretic
   solutions to the Yang-Baxter equation with multipermutation level two},
   journal={Algebr. Represent. Theory},
   volume={25},
   date={2022},
   number={5},
   pages={1293--1307},
}

\bib{Vendramin19}{article}{
  author={Vendramin, Leandro},
  title={Problems on skew left braces},
  journal={Adv. Group Theory Appl.},
  volume={7},
  date={2019},
  pages={15--37},
}
\end{biblist}
\end{bibdiv}
\end{document}